\documentclass{amsart}
\usepackage{amsmath,amssymb}
\usepackage{graphicx,subfigure}
\newtheorem{theorem}{Theorem}[section]

\newtheorem{corollary}[theorem]{Corollary}

\theoremstyle{definition}
 \newtheorem{definition}[theorem]{Definition}

\theoremstyle{remark}
\newtheorem{remark}[theorem]{Remark}

\numberwithin{equation}{section}

\newcommand{\al}{\alpha}
\newcommand{\be}{\beta}
\newcommand{\de}{\delta}
\newcommand{\ep}{\epsilon}

\newcommand{\la}{\lambda}
\newcommand{\om}{\omega}

%
\newcommand{\De}{\Delta}
\newcommand{\Ga}{\Gamma}

\newcommand{\Si}{\Sigma}
\newcommand{\Om}{\Omega}

%

%

%

\newcommand{\tOm}{\widetilde{\Om}}

%

%

%

\def\RR{\mathbb{R}}

\renewcommand\SS{\mathbb{S}}
%


\newcommand{\cM}{{\mathcal M}}

\newcommand{\cV}{{\mathcal V}}

%

%

%

\newcommand{\pd}{\partial}
\newcommand\minus\backslash
\newcommand{\id}{{\rm id}}

\newcommand\lan\langle
\newcommand\ran\rangle

%


\renewcommand\leq\leqslant
\renewcommand\geq\geqslant
%
\newlength{\intwidth}

%

%
\addtolength{\parskip}{3pt}

\newcommand\loc{_{\mathrm{loc}}}


\newcommand\X{\langle x\rangle}

\begin{document}

\title[Solutions to the Allen--Cahn equation of any
topology]{Bounded solutions to the Allen--Cahn equation\\ with level
  sets of any compact topology}

\author{Alberto Enciso}
\address{Instituto de Ciencias Matem\'aticas, Consejo Superior de
  Investigaciones Cient\'\i ficas, 28049 Madrid, Spain}
\email{aenciso@icmat.es, dperalta@icmat.es}

\author{Daniel Peralta-Salas}

%
%
\begin{abstract}
We make use of the flexibility of infinite-index solutions to the
Allen--Cahn equation to show that, given any compact hypersurface~$\Si$ of~$\RR^d$, with $d\geq 4$, there is a
bounded entire solution of the Allen--Cahn equation on~$\RR^d$ whose
zero level set has a connected component diffeomorphic (and arbitrarily close) to a rescaling
of~$\Si$. More generally, we prove the existence of solutions with a
finite number of compact connected components of prescribed topology
in their zero level sets.
\end{abstract}
\maketitle

\section{Introduction}

The study of the analogies between the level sets of the
solutions to the Allen--Cahn equation
\begin{equation*}
\De u+ u-u^3=0
\end{equation*}
in $\RR^d$ and minimal hypersurfaces in~$\RR^d$ was greatly fostered
by De Giorgi's 1978 conjecture that all the level sets of any entire
solution to the Allen--Cahn equation that is monotone in one direction
have to be hyperplanes for $d\leq8$. This is a natural counterpart
of the Bernstein problem for minimal hypersurfaces, which asserts that
any minimal graph in $\RR^d$ must be a hyperplane provided that
$d\leq8$. Ghoussoub--Ghi and Ambrosio--Cabr\'e proved De Giorgi's
conjecture for $d=2,3$~\cite{GG1,AC}, and the work of
Savin~\cite{Savin} showed that it is also true for $4\leq d\leq8$ under a
weak additional technical assumption. Del Pino, Kowalczyk and Wei~\cite{PKW11}
employed the Bombieri--De Giorgi--Giusti hypersurface to show that the
statement of De Giorgi's conjeture does not hold for $d\geq9$.

In dimension 2, it is well known~\cite{Dancer04} that the monotonicity
hypothesis can be relaxed to the assumption that the solution~$u$ is
{\em stable}\/, i.e., that its Morse index is~0. Let us recall that the {\em
  Morse index}\/ of~$u$ is the maximal dimension of a vector space
$V\subset C^\infty_0(\RR^d)$ such that
\[
\int_{\RR^d} \big( |\nabla v|^2-v^2+3u^2v^2)\, dx<0
\]
for all nonzero $v\in V$. Remarkably, it has been shown recently~\cite{PW} that in
dimension~8 (actually, in any even dimension $d\geq8$) there are
bounded stable solutions to the Allen--Cahn equation whose level sets
are not hyperplanes, but rather they are asymptotic to a minimal
cone. For the role of minimal cones in the Allen--Cahn equation, see
also~\cite{CT} and references therein. In dimensions $d\leq7$, the
level sets of stable solutions to the Allen--Cahn equation are
conjectured to be all hyperplanes~\cite{PW}.

The analysis and possible classification of bounded entire solutions
to the Allen--Cahn equation is an important open problem where the
Morse index of the solutions plays a key role. Unlike the stable case~\cite{Dancer04},
the structure of solutions with finite Morse index can be very
complex; in fact, in dimension~3 a result of del Pino, Kowalczyk and
Wei~\cite{PinoJDG} ensures that, under mild technical assumptions, given any embedded
complete minimal surface in~$\RR^3$ with finite total curvature, there
is a bounded entire solution to the Allen--Cahn equation with a level
set that is close to a large rescaling of this minimal surface, and
that the Morse index of this solution coincides with the genus of the
surface. Also in this direction, the existence of solutions to the
Allen-Cahn equation with a level set close to a nondegenerate minimal
hypersurface was proved by Pacard and Ritor\'e~\cite{PR03} provided
that the ambient space is a compact Riemannian manifold (instead of
$\RR^d$). Furthermore, Agudelo, del Pino and
Wei~\cite{APW} have recently constructed bounded entire axisymmetric solutions
on~$\RR^3$ of arbitrarily large index that have multiple catenoidal ends.

Generally speaking, it is expected~\cite{PNAS,PinoJDG} that the condition that
the Morse index of the solution be finite should play a similar role
as the finite total curvature assumption in the study of minimal
hypersurfaces in Euclidean spaces. In particular, it is well known
that there are many infinite-index solutions to the Allen--Cahn
equation~\cite{Dancer01,CT}, and this abundance of solutions should translate into a
wealth of possible level sets. 

Our objective in this paper is to explore the flexibility of bounded
entire solutions to the Allen--Cahn equation of infinite index by
showing that there are bounded solutions to the Allen--Cahn equation
on~$\RR^d$ with level sets of any compact topology. Specifically,
given a compact hypersurface~$\Si$ without boundary of~$\RR^d$, we
will show that there is a rescaling of $\Si$ that is arbitrarily close
to a connected component of the nodal set of a bounded entire solution
of the Allen--Cahn equation. Furthermore, this level set is {\em
  structurally stable}\/ in the sense that any function on $\RR^d$
which is sufficiently close to~$u$ in the $C^1$~norm in a neighborhood
of this set will also have a zero level set of the same topology. In
view of the existing literature, we are particularly interested in the
case of high dimension~$d$.

To present a precise statement, let us agree to say
that an {\em $\ep$-rescaling}\/ is a diffeomorphism of $\RR^d$ that
can be written as $\Phi=\Phi_1\circ \Phi_2$, where $\Phi_2$ is a
rescaling and $\|\Phi_1-\id\|_{C^1(\RR^d)}<\ep$ (here we could
have taken any other fixed $C^k$ norm, though). By a {\em
  hypersurface}\/ we will refer to a smoothly embedded codimension~1
submanifold of $\RR^d$, so self-intersections will not be allowed. Furthermore, in
what follows we will use the notation $\langle x\rangle:=(1+|x|^2)^{1/2}$ for the Japanese
bracket.

\begin{theorem}\label{T.main}
  Let $\Si$ be any compact orientable hypersurface  without boundary
  of~$\RR^d$, with $d\geq4$, and take any $\ep>0$. Then there is an entire solution~$u$ of the
  Allen--Cahn equation in~$\RR^d$ such that its zero level set
  $u^{-1}(0)$ has a connected component given by $\Phi(\Si)$, where
  $\Phi$ is an $\ep$-rescaling. Furthermore, this set is
  structurally stable and~$u$ falls off at infinity as $|u(x)|<C\langle
  x\rangle^{\frac{1-d}2}$. 
\end{theorem}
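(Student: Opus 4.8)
The plan is to construct $u$ as a small perturbation of a suitably scaled and cut-off copy of the one-dimensional heteroclinic profile $H(t)=\tanh(t/\sqrt2)$ attached to a perturbation of the hyperplane $\RR^{d-1}\times\{0\}$, which we may deform near a compact region so that its zero set looks like a large rescaling of $\Si$. Concretely, embed $\Si$ (after rescaling by a large factor $R$) as a graph-like perturbation of a coordinate hyperplane: since $\Si$ is compact and orientable, $R\Si$ bounds a compact region, and one can write $R\Si$ as the boundary of a thin "bump" glued onto $\{x_d=0\}$ outside a ball of radius comparable to $R$ — this uses $d\geq4$ only to have enough room, although in fact the construction already works with the hypersurface realized as a normal graph over a fixed model. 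Let $\Si_R$ denote this hypersurface (diffeomorphic and $C^1$-close to $R\Si$, hence $\Phi(\Si)$ for an $\ep$-rescaling $\Phi$ after rescaling back), let $\rho(x)$ be the signed distance to $\Si_R$ in a tubular neighborhood, and set the approximate solution $u_0 := \chi(x)\,H(\rho(x)) + (1-\chi(x))\,u_{\mathrm{far}}(x)$, where $\chi$ is a cutoff supported in the tube and $u_{\mathrm{far}}$ is the genuine one-dimensional solution $H(x_d)$ (or a decaying correction of it) away from the bump.

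The key analytic step is to solve $\De(u_0+v) + (u_0+v) - (u_0+v)^3 = 0$ for $v$ small, i.e.
\[
Lv := \De v + v - 3u_0^2 v = -E(u_0) - N(v),
\]
where $E(u_0) := \De u_0 + u_0 - u_0^3$ is the error of the approximate solution and $N(v) := -3u_0 v^2 - v^3$ is the quadratic remainder. The error $E(u_0)$ is supported in the cutoff region and in the region where $\rho$ fails to be the distance to a minimal hypersurface; by taking $R$ large it is small in weighted $L^\infty$ (or $L^2$) norms adapted to the decay $\langle x\rangle^{(1-d)/2}$, since the mean curvature of $\Si_R$ is $O(1/R)$ and the profile $H$ converges to $\pm1$ exponentially. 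The operator $L$ has a one-dimensional cokernel transverse to $\Si_R$ coming from $\pa_\rho u_0 \approx H'(\rho)$ (the translation mode), and is otherwise invertible on the orthogonal complement in appropriate weighted spaces — this is the standard Lyapunov–Schmidt situation for Allen–Cahn concentrating on a hypersurface. One inverts $L$ modulo this mode, solves the nonlinear equation by a fixed-point argument (the nonlinearity is quadratic, hence contractive on a small ball once $R$ is large), and then kills the remaining Lagrange-multiplier function by adjusting the normal position of $\Si_R$ — i.e. by the freedom in choosing the bump, which after all was only required to be $C^1$-close to $R\Si$, so we may absorb a $C^{2,\alpha}$-small normal displacement into the $\ep$-rescaling $\Phi_1$.

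Two further points complete the argument. First, the decay $|u(x)| < C\langle x\rangle^{(1-d)/2}$: away from the bump, $u$ is close to $H(x_d)$, which does not decay — so in fact the statement must refer to a solution for which the "background" is not the heteroclinic but rather decays; the way to arrange this is to take $u_{\mathrm{far}}$ to be a bounded solution on $\RR^d$ that decays like $\langle x\rangle^{(1-d)/2}$ (such infinite-index decaying solutions exist, e.g. by superposing spherical or cylindrical profiles as in the cited work of Dancer and others, or by using a solution of the linearized-at-$0$ equation $\De w + w = 0$ with the right decay as the leading term and correcting), and glue the bump onto a nodal component of that background; the construction above then localizes near the bump and leaves the decay of $u_{\mathrm{far}}$ intact outside. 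Second, structural stability of the component $\Phi(\Si)$ is immediate: on $\Phi(\Si)$ one has $u=0$ and, by the above estimates, $|\nabla u| \geq c > 0$ (it is $\approx |H'(\rho)| \ne 0$ there), so $0$ is a regular value of $u$ on a neighborhood of $\Phi(\Si)$ and the implicit function theorem shows any $C^1$-close function has a diffeomorphic zero component there.

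The main obstacle I expect is the \emph{linear theory}: establishing invertibility of $L$ (modulo the single translation mode) with uniform bounds in the weighted norm that simultaneously (a) captures the exponential localization near $\Si_R$, (b) is compatible with the polynomial falloff $\langle x\rangle^{(1-d)/2}$ of the background at spatial infinity, and (c) is uniform as $R\to\infty$. This requires patching a local spectral-gap estimate near $\Si_R$ (from the one-dimensional operator $-\pa_t^2 + 1 - 3H^2$, whose bottom eigenvalue $0$ is the translation mode and whose gap is $\tfrac32$) together with a global estimate for $\De + 1$ (really $\De$ plus a positive mass away from the transition layer, since $1-3u_0^2 \to -2$ where $u_0\to\pm1$) in the polynomially weighted space — the delicate region being the far field of the bump where the transition layer of $u_{\mathrm{far}}$ and that of $\Si_R$ must be matched without losing the weight.
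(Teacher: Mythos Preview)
Your approach has a fundamental mismatch with both the decay requirement and the geometry. You are attempting a transition-layer (Pacard--Ritor\'e / del Pino--Kowalczyk--Wei style) construction, linearizing around the heteroclinic $H$ with $u\approx\pm1$ off the interface. But that machinery produces a nodal interface close to a \emph{minimal} hypersurface: the Lagrange multiplier you propose to ``kill by adjusting the normal position of $\Si_R$'' is, to leading order, the mean curvature of $\Si_R$ paired with $H'$, and eliminating it forces the reduced (Jacobi-type) equation to drive $\Si_R$ toward minimality. Since $\RR^d$ admits no compact minimal hypersurfaces, no small normal displacement can close the scheme for an arbitrary compact $\Si$. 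Independently, the transition-layer regime forces $u\to\pm1$ and is incompatible with $|u(x)|<C\X^{(1-d)/2}$; you notice this and try to swap in a small decaying background $u_{\mathrm{far}}$, but once $u_0$ is uniformly small the potential $1-3u_0^2$ is close to $+1$ rather than $-2$, so the one-dimensional spectral gap of $-\pd_t^2+1-3H^2$ and the entire Lyapunov--Schmidt reduction no longer apply --- you are then in a different linearization altogether. (There is also a geometric slip: a closed compact $\Si$ cannot be realized as a ``bump glued onto $\{x_d=0\}$'' while remaining diffeomorphic to $\Si$.)

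The paper works entirely in the \emph{small-amplitude} regime, linearizing Allen--Cahn at $u=0$ so that the leading operator is the Helmholtz operator $\De+1$. First one builds a global solution $w$ of $\De w+w=0$ with $|\pd^\al w|\leq C_\al\X^{(1-d)/2}$ whose nodal set has a structurally stable component $\Psi(\Si)$: rescale $\Si$ so that the domain it bounds has first Dirichlet eigenvalue $1$, take the first eigenfunction (nonvanishing gradient on $\Si$ by Hopf), and approximate it by a global Helmholtz solution via a Runge/Hahn--Banach argument followed by truncation in hyperspherical Bessel modes to secure the sharp decay. Then set $u_0=\de w$ with $\de$ small and iterate $u_{n+1}=\de w+G*(u_n^3)$, where $G$ is a fundamental solution of $\De+1$. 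The analytic core is the weighted bound $\|G*v\|_{k,(d-1)/2}\leq C\|v\|_{k,\nu}$ for $\nu>d$; since $\tfrac{3(d-1)}2>d$ precisely when $d\geq4$, the cubic nonlinearity maps $C^k_{(d-1)/2}$ back into an admissible space, the iteration contracts, and the limit $u$ satisfies $\|u/\de-w\|_{C^1}\leq C\de^2$. Thom's isotopy theorem then transports the nodal component from $w$ to $u$. No minimality of $\Si$, no Lyapunov--Schmidt, and no matching of layers is needed; this is also where the dimensional restriction $d\geq4$ genuinely enters.
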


It is worth mentioning that the result that we will actually prove
(Theorem~\ref{T.precise}) is in fact stronger, in the sense that given
any finite number of hypersurfaces $\Si_1,\dots,\Si_N$ that are
not linked (see Definition~\ref{D.unlinked}) we will show that there is a
diffeomorphism $\Phi$ such that $\Phi(\Si_1)\cup \cdots\cup
\Phi(\Si_N)$ is a union of connected components of the nodal set of a
bounded entire solution to the Allen--Cahn equation. The
diffeomorphism~$\Phi$ is not an $\ep$-rescaling, although it does act
on each hypersurface~$\Si_j$ as an $\ep$-rescaling composed with
a rigid motion.

The idea of the proof of the theorem is that, when $u$ is small in a
suitable sense, solutions to the Allen--Cahn equation behave as
solutions to the Helmholtz equation
\[
\De w + w=0\,.
\]
Hence a key step of the proof is to establish an analog of
Theorem~\ref{T.main} for solutions to the Helmholtz equations with the
sharp fall-off rate at infinity, which is as $\X^{\frac{1-d}2}$ (Theorem~\ref{T.Helmholtz}). For
this we combine a construction using the first eigenfunction of the
domain bounded by~$\Si$ with a Runge-type theorem with decay
conditions at infinity that generalizes the results that we proved
in~\cite{Annals,Acta} for Beltrami fields on~$\RR^3$. Using suitable weighted
estimates for a convolution operator associated with the Helmholtz
equation (Theorem~\ref{T.G}), we then promote these solutions of the
Helmholtz equation to solutions of the Allen--Cahn equation and show
that the latter still possess a nodal set of the desired
topology. From the method of proof it stems that the statement of
Theorem~\ref{T.main} remains valid for much more general
nonlinearities. (More precisely, one can replace $u^3$ by a smooth enough function
$F(u)$ that behaves as $u^{1+\al}$ as $u\to0$ for some $\al>0$. The
statement then remains valid provided the dimension is larger than
some explicit constant $d_0(\al)$.)

\section{Bounded solutions to the Helmholtz equation}
\label{S.Helmholtz}

In this section we will prove an analog of Theorem~\ref{T.main} for
solutions to the Helmholtz equation on~$\RR^d$. We shall begin by
introducing some notation. 

Let us consider the function
\begin{equation}\label{Gfunct}
G(x):=\be\,|x|^{1-\frac d2}\, Y_{\frac d2-1}(|x|)\,,
\end{equation}
where $Y_{\frac d2-1}$ denotes the Bessel function of the second kind
and we have set
\[
\be:=\frac{2^{1-\frac d2}\pi}{|\SS^{d-1}|\, \Ga(\frac d2-1)}\,,
\]
with $|\SS^{d-1}|$ the area of the unit $(d-1)$-sphere and $\Ga$ the
Gamma function. A simple computation in spherical coordinates shows that
$\De G+G=0$ everywhere but at the origin and the asymptotics for
Bessel functions shows that
\[
G(x)=-\frac1{|\SS^{d-1}|\, |x|^{d-2}}+O(|x|^{3-d})
\]
as $x\to0$. It then follows that $G$ is a fundamental solution for the
Helmholtz equation, so if $v$ is, say, a Schwartz function
on~$\RR^d$ the convolution $G*v$ satisfies
\begin{equation}\label{fundamental}
\De(G*v)+ G*v=v\,.
\end{equation}

As we discussed in the Introduction, we will prove a result that is
considerably more general than Theorem~\ref{T.main}, as it applies to
an arbitrary number of hypersurfaces. There is, however, a topological
condition that we must impose on these hypersurfaces, which is
described in the following
\begin{definition}\label{D.unlinked}
Let $\Si_1,\dots,\Si_N$ be compact orientable
hypersurfaces without boundary of $\RR^d$. We will say that they are {\em
  not linked}\/ if there are $N$ pairwise
disjoint contractible sets $S_1,\dots, S_N$ such that each
hypersurface $\Si_j$ is contained in $S_j$. 
\end{definition}

We are now ready to state and prove the main result of this
section. Notice that the proof of the theorem provides a satisfactory
description of the structure of the diffeomorphism~$\Psi$, as noted in
Remark~\ref{R.rescaling} below. The proof makes use of some techniques we introduced in~\cite{Adv} to study the level sets of harmonic functions and in~\cite{Acta} to construct Beltrami fields with prescribed vortex tubes. Throughout, diffeomorphisms are assumed to be of class
$C^\infty$ and connected with the identity, and $B_R$ denotes the ball
centered at the origin of radius~$R$. Observe that, of course, for
$N=1$ the condition that the hypersurface be not linked is empty,
as it is satisfied trivially.

\begin{theorem}\label{T.Helmholtz}
Let $\Si_1,\dots,\Si_N$ be compact orientable
hypersurfaces without boundary of $\RR^d$ that are not linked, with $d\geq 3$. Then
there is a function~$w$ satisfying the Helmholtz equation
\[
\De w+ w=0
\]
in $\RR^d$ and a diffeomorphism $\Psi$ of $\RR^d$ such that
$\Psi(\Si_1),\dots ,\Psi(\Si_N)$ are structurally stable connected components of
the zero set $w^{-1}(0)$. Furthermore, $w$ falls off at infinity as
$|\pd^\al w(x)|<C_\al\X^{\frac{1-d}2}$ for any multiindex~$\al$.
\end{theorem}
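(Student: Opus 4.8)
The plan is to build, near each hypersurface $\Si_j$ after a suitable repositioning and rescaling, a \emph{local} solution of the Helmholtz equation whose zero set is exactly $\Si_j$ and whose gradient does not vanish there, and then to globalize it by a Runge-type approximation theorem that produces a global solution decaying at the sharp rate $\X^{(1-d)/2}$; structural stability of the resulting nodal components is then automatic by transversality. For the first reduction, note that since the $\Si_j$ are not linked they lie in pairwise disjoint contractible sets, which prevents any $\Si_i$ from being contained in the bounded region enclosed by another $\Si_j$; a standard ambient isotopy then moves the hypersurfaces so that the bounded open sets $\Om_j$ with $\pd\Om_j=\Si_j$ have pairwise disjoint closures, each contained in its own ball. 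By an additional $C^\infty$-small perturbation we may assume every $\Si_j$ is real-analytic, and by rescaling $\Si_j$ with the factor $\sqrt{\la_1(\Om_j)}$ we may assume the first Dirichlet eigenvalue of each $\Om_j$ equals~$1$. All of these operations modify $\Si_j$ only by a rescaling composed with a rigid motion and a $C^1$-small diffeomorphism, as will be recorded in Remark~\ref{R.rescaling}.

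For the local model, let $\phi_j>0$ be the first Dirichlet eigenfunction of $\Om_j$, so that $\De\phi_j+\phi_j=0$ in $\Om_j$ and $\phi_j|_{\Si_j}=0$. As $\Si_j$ is analytic, $\phi_j$ is analytic up to the boundary; since the Helmholtz operator is elliptic (so $\Si_j$ is noncharacteristic) and the data are analytic, the Cauchy--Kovalevskaya theorem produces an analytic solution $v_j$ of $\De v_j+v_j=0$ in a two-sided neighborhood $V_j$ of $\Si_j$ with Cauchy data $(v_j,\pd_\nu v_j)|_{\Si_j}=(0,\pd_\nu\phi_j)$. By uniqueness $v_j=\phi_j$ on the $\Om_j$-side, so the two formulas patch into a single solution $v$ defined on a neighborhood of the compact set $K_j:=\overline{\Om_j}\cup\overline{V_j^{+}}$, where $V_j^{+}$ is a thin outer collar. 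Hopf's lemma gives $\pd_\nu\phi_j\neq0$ on $\Si_j$, so $\nabla v\neq0$ there, $v^{-1}(0)$ coincides with $\Si_j$ near $\Si_j$, and $v>0$ just inside $\Si_j$ while $v<0$ just outside. Finally $K:=K_1\cup\dots\cup K_N$ is a disjoint union of ``filled blobs'', so $\RR^d\setminus K$ is connected, and $v$ solves the Helmholtz equation on a neighborhood of $K$.

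The key step is to approximate $v$ in $C^1(K)$, within any prescribed $\de>0$, by a global Helmholtz solution $w$ lying in the class $\cW$ of superpositions of plane waves $w_g(x)=\int_{\SS^{d-1}}e^{\I x\cdot\om}\,g(\om)\,d\om$ with $g\in C^\infty(\SS^{d-1})$: stationary phase shows that every such $w_g$ satisfies $|\pd^\al w_g(x)|\leq C_\al\X^{(1-d)/2}$, so once the approximation is established the decay estimate is automatic. I would prove the approximation by duality. If it failed, there would be a compactly supported distribution $\mu$ on $K$ with $\int v\,d\mu\neq0$ and $\int w_g\,d\mu=0$ for all $g\in C^\infty(\SS^{d-1})$; the second condition says that $\widehat\mu$ vanishes on the unit sphere. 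Since $\widehat\mu$ is entire of exponential type (Paley--Wiener) and the real sphere is a maximal totally real submanifold of the complex quadric $Q=\{\ze\cdot\ze=1\}$, $\widehat\mu$ must vanish on all of $Q$, hence $\widehat\mu(\ze)=(\ze\cdot\ze-1)\,H(\ze)$ with $H$ entire; a standard estimate (division by a polynomial does not enlarge the supporting function) shows $H$ is of exponential type with the same supporting function as $\mu$, so $H=\widehat{g_0}$ for a distribution $g_0$ with $\supp g_0\subset\operatorname{conv}(K)$ and $\mu=-(\De+1)g_0$. Since $\mu$ vanishes off $K$, the function $g_0$ solves the Helmholtz equation on $\RR^d\setminus K$ while vanishing outside $\operatorname{conv}(K)$, and as $\RR^d\setminus K$ is connected, unique continuation forces $\supp g_0\subset K$. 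But then $\int v\,d\mu=-\int v\,(\De+1)g_0=-\int (\De+1)v\cdot g_0=0$ because $v$ is a Helmholtz solution near $\supp g_0$, a contradiction.

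To conclude, fix $\de$ small compared with the infimum of $|v|$ over the bulk of the sets $\Om_j$ and over the outer parts of the collars, and compared with the infimum of $|\nabla v|$ over the collars around the $\Si_j$. Then $w>0$ on the inner bulk of each $\Om_j$, $w<0$ on the outer part of each collar, and $\nabla w\neq0$ on the collars, so $w^{-1}(0)$ meets the $j$-th collar in a compact hypersurface that is a small graph over $\Si_j$, that is, $\Psi_j(\Si_j)$ for a diffeomorphism $\Psi_j$ of $\RR^d$ which is $C^1$-close to the identity and supported near $\Si_j$; being compact and open in $w^{-1}(0)$, it is a connected component. The $\Psi_j$ have disjoint supports, so they combine into a single diffeomorphism; composing it with the repositioning map of the first step yields the required $\Psi$. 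For $N=1$ the non-linking hypothesis is vacuous and the argument is unchanged. I expect the main obstacle to be the approximation in the third step: plain Runge approximation for the Helmholtz equation already follows from a routine Hahn--Banach and Paley--Wiener argument, but demanding that the approximants decay like $\X^{(1-d)/2}$ restricts the test objects in the duality to plane-wave superpositions, so a priori one only learns that $\widehat\mu$ vanishes on the \emph{real} sphere; upgrading this to vanishing on the complex quadric and performing the polynomial division with control of supports is the delicate point, and it is here that the techniques of \cite{Annals,Acta} are decisive.
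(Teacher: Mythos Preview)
Your argument is correct and, in particular, the density of Herglotz wave functions that you prove by the Paley--Wiener/complex-quadric duality is a valid way to obtain the global approximant with the sharp $\X^{(1-d)/2}$ decay. The paper, however, reaches the same endpoint by a different and more elementary route. After the identical first step (rescale to Dirichlet eigenvalue~$1$, take the first eigenfunction, extend analytically across the boundary), the paper cuts off to get a compactly supported $w_1$, writes $w_1=G*f$ with $f$ supported in the collar, discretizes to a finite sum of translated fundamental solutions $w_2$, and then uses a Hahn--Banach argument with \emph{finite signed measures} (Riesz--Markov on $C^0(\Om)$) together with unique continuation to sweep the poles outside a large ball, obtaining $w_3$. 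Only at the very end does it expand $w_3$ in hyperspherical Bessel functions and spherical harmonics and truncate to get the decaying solution $w$; the $C^0$ closeness is then upgraded to $C^1$ by interior elliptic estimates. Thus both proofs ultimately produce a finite sum $\sum c_{lm}j_l(r)Y_{lm}(\om)$ --- i.e., a Herglotz wave --- but the paper's path is entirely real-variable and potential-theoretic, avoiding the Paley--Wiener--Schwartz theorem, the analytic continuation to the complex quadric, and the polynomial-division lemma that you need. Your approach is shorter and builds the decay in from the outset, at the price of invoking heavier complex-analytic machinery; the paper's approach is more hands-on and keeps the duality at the level of measures rather than distributions, which also makes the $C^0\!\to\!C^1$ upgrade via elliptic estimates transparent.
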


\begin{proof}
An easy application of Whitney's approximation theorem ensures that,
by perturbing the hypersurfaces a little if necessary, we can assume
that $\Si_j$ is a real analytic hypersurface of $\RR^d$. The fact that
the hypersurfaces are not linked allow us now to rescale and translate
them so that the (unique) precompact domains $\Om_j$ that are bounded by each rescaled and
translated real-analytic hypersurface, which we will call $\Si_j':=\pd\Om_j$,
are pairwise disjoint and their first Dirichlet eigenvalue
$\la_1(\Om_j)$ is 1. The first eigenvalue is always simple, so there is a
unique eigenfunction $\psi_j$, modulo a multiplicative constant,
that satisfies the eigenvalue equation
\[
\De \psi_j+\psi_j=0\quad \text{in }
\Om_j\,,\qquad \psi_j|_{\Si_j'}=0\,.
\]
We can choose $\psi_j$ so that it is positive in $\Om_j$.

Hopf's boundary point lemma shows that the gradient of $\psi_j$
does not vanish on~$\Si_j$:
\begin{equation}\label{gradpsij}
\min_{x\in\Si_j}|\nabla\psi_j(x)|>0\,.
\end{equation}
Furthermore, as the hypersurface $\Si_j$ is analytic, it is
standard that $\psi_j$ is analytic in an open
neighborhood $\tOm_j$ of the closure of ${\Om_j}$.

Our goal is to construct a solution $w$ of the
Helmholtz equation in $\RR^d$ that approximates each function
$\psi_j$ in the set $\Om_j$. To this end, let us take a smooth
function $\chi:\RR^d\to\RR$ that is equal to $1$ in a narrow
neighborhood of the closure~$\overline\Om$
and is identically zero outside $\tOm$, with
\[
\tOm:=\bigcup_{j=1}^N\tOm_j\,,\qquad \Om:=\bigcup_{j=1}^N\Om_j\,.
\]
We can now define a smooth function $w_1$ on $\RR^d$ by setting
\[
w_1:=\sum_{j=1}^N\chi\,\psi_j\,.
\]
Here we are assuming that $w_1:=0$ outside $\tOm$.

Since $w_1$ is
compactly supported, we can employ the fundamental solution~\eqref{Gfunct} to write
\begin{equation}\label{intbv}
w_1(x)=\int_{\RR^d} G(x-y)\, f(y)\, dy
\end{equation}
with $f:=\De w_1+w_1$. The support of the function $f$ is obviously
contained in the open set $\tOm\backslash \overline\Om$.
Therefore, an easy continuity argument ensures that one
can approximate the integral~\eqref{intbv} uniformly in the compact
set~$\overline\Om$ by a finite
Riemann sum of the form
\begin{equation}\label{vp1}
w_2(x):=\sum_{n=1}^{M} c_n\, G(x-x_n)\,.
\end{equation}
Specifically, it is standard that for any $\de>0$ there is a large integer~$M$, real numbers ~$c_n$ and points $x_n\in \tOm\backslash \overline\Om$ such that the
finite sum~\eqref{vp1} satisfies
\begin{equation}\label{est1}
\|w_1-w_2\|_{C^0(\Om)}<\de\,.
\end{equation}

Let us now take a large ball $B_R$ containing the closure of the
set~$\tOm$. We shall next show that there is a finite number of points
$\{x_n'\}_{n=1}^{M'}$ in $\RR^d\backslash \overline {B_R}$ and
constants $c_n'$ such that the finite linear combination
\begin{equation}\label{eqmwx2}
w_3(x):=\sum_{n=1}^{M'} c_n'\, G(x-x'_n)
\end{equation}
approximates the function $w_2$ uniformly in $\Om$:
\begin{equation}\label{GtG}
\|w_2-w_3\|_{C^0(\Om)}<\de\,.
\end{equation}
Here $\de$ is the same arbitrarily small constant as above.

Consider the space $\cV$ of all finite linear
combinations of the form~\eqref{eqmwx2} where
$x_n'$ can be any point in $\RR^d\backslash \overline{B_R}$ and the 
constants $c_n'$ take arbitrary values. Restricting these functions to the set $\Om$, $\cV$ can
be regarded as a subspace of the Banach space $C^0(\Om)$ of continuous
functions on $\Om$.

By the Riesz--Markov theorem, the dual of $C^0(\Om)$ is  the space
$\cM(\Om)$ of the finite signed Borel measures on $\RR^d$ whose support
is contained in the set~$\Om$. Let us take any measure
$\mu\in\cM(\Om)$ such that $\int_{\RR^d} f\,d\mu=0$ for all
$f\in \cV$. We now define a function $F\in L^1\loc(\RR^d)$ as
\[
F(x):=\int_{\RR^d} G(x - x')\,d\mu(x')\,, 
\]
so that $F$ satisfies the equation 
\[
\De F+F=\mu\,.
\]
Notice that $F$ is identically zero on $\RR^d\backslash \overline {B_R}$ by the definition of the
measure~$\mu$ and that $F$
satisfies the elliptic
equation
\[
\De F+F=0
\]
in $\RR^d\minus \overline\Om$, so $F$ is analytic in this set. Hence, since
$\RR^d\minus \overline\Om$ is connected and contains the set $\RR^d\minus B_R$, by
analyticity the function $F$ must vanish on the complement of $\Om$. It
then follows that the measure $\mu$ also annihilates the function
$G(\cdot-y)$ with $y\not\in \overline\Om$
because
\[
0=F(y)=\int_{\RR^d} G(y-x')\,d\mu(x')\,.
\]
Therefore 
\[
\int_{\RR^d}w_2\,d\mu=0\,,
\]
which implies that $w_2$ can be
uniformly approximated on~$\Om$ by elements of the subspace $\cV$, due to the Hahn--Banach theorem. Accordingly, there is a finite set of points
$\{x_n'\}_{n=1}^{M'}$ in $\RR^d\backslash \overline {B_R}$ and 
reals $c_n'$ such that the function $w_3$ defined by~\eqref{eqmwx2}
satisfies the estimate~\eqref{GtG}.

To complete the proof of the theorem, notice that the function $w_3$
satisfies the equation
\begin{equation}\label{eqwmi}
\De w_3+w_3=0
\end{equation}
in the ball $B_R$, whose interior contains $\Om$. Let us take hyperspherical
coordinates $r:=|x|$ and $\om:=x/|x|\in\SS^{d-1}$ in $B_R$. Expanding the function
$w_3$ (with respect to the angular variables) in a series of
spherical harmonics and using Eq.~\eqref{eqwmi}, we immediately obtain
that $w_3$ can be written in the ball as a Fourier--Bessel series of the form
\[
w_3=\sum_{l=0}^\infty\sum_{m\in I_l} c_{lm}\, j_l(r)\, Y_{lm}(\om)\,,
\]
where $j_l$ denotes a $d$-dimensional hyperspherical Bessel function,
$Y_{lm}$ are spherical harmonics on $\SS^{d-1}$ and $I_l$ is a finite
set that depends on~$l$ and whose explicit expression will not be
needed here. 

Since the above series converges in $L^2(B_R)$, for any $\de>0$ there is an integer $l_0$
such that the finite sum
\[
w:=\sum_{l=0}^{l_0}\sum_{m\in I_l}  c_{lm}\, j_l(r)\, Y_{lm}(\om)
\]
approximates the function $w_3$ in an $L^2$ sense:
\begin{equation}\label{huwL2}
\|w-w_3\|_{L^2(B_R)}<\de\,.
\end{equation}
By the properties of Bessel functions, $w$ is smooth in $\RR^d$, falls
off as 
\[
|\pd^\al w(x)|\leq C_\al\X^{\frac{1-d}2}
\]
at infinity for any multiindex~$\al$ and satisfies the equation 
\begin{equation}\label{eqmhu}
\De w+w=0
\end{equation}
in the whole space. 

Given any $R'<R$ large enough for the
set $\Om$ to be contained in the ball $B_{R'}$, standard elliptic
estimates allow us to pass from the $L^2$ bound~\eqref{huwL2} to a
uniform estimate
\[
\|w-w_3\|_{C^0(B_{R'})}<C\delta\,.
\]
From this inequality and the bounds~\eqref{est1} and~\eqref{GtG} we infer
\begin{equation}\label{boundmi}
\|w-w_1\|_{C^0(\Om)}<C\de\,.
\end{equation} 
Moreover, since $w_1$ also satisfies the Helmholtz equation in a
neighborhood of
the compact set $\overline\Om$, standard elliptic estimates again imply that
the uniform estimate~\eqref{boundmi} can be promoted to the $C^1$
bound
\begin{equation}\label{lastbound}
\|w-w_1\|_{C^1(\Om)}<C\de\,.
\end{equation}

Finally, since $\Si_1\cup\cdots\cup \Si_N$ is a union of components of the the nodal
set of $w_1$  and the gradient of $w_1$ does not vanish on these
hypersurfaces by~\eqref{gradpsij}, the estimate~\eqref{lastbound} and a direct
application of Thom's isotopy theorem~\cite[Theorem 20.2]{AR} imply
that there is a diffeomorphism $\Psi$ of $\RR^d$ such that
\begin{equation}\label{lvPsi}
\Psi(\Si_1\cup\cdots \cup\Si_N)
\end{equation}
is a union of components of the zero set
$w^{-1}(0)$. Moreover, the diffeomorphism $\Psi$ is $C^1$-close to
the identity. The structural stability of the set~\eqref{lvPsi} for the function
$w$ also follows from Thom's isotopy theorem and the lower bound
\[
\min_{x\in\Psi(\Si_1\cup\cdots \cup\Si_N)}|\nabla w(x)|>0\,,
\]
as a consequence of the
$C^1$~estimate~\eqref{lastbound} and the fact that the function~$w_1$
satisfies the gradient condition~\eqref{gradpsij}.
\end{proof}

\begin{remark}\label{R.rescaling}
It follows from the proof that there are rescalings $\Psi_j^2$,
translations $\Psi_j^3$ and diffeomorphisms $\Psi_j^1$ with
$\|\Psi_j^1-\id\|_{C^1(\RR^d)}$ arbitrarily small such that
\[
\Psi(\Si_j)=(\Psi^1_j\circ\Psi^2_j\circ\Psi^3_j)(\Si_j)\,.
\]
In particular, if $N=1$ the diffeomorphism~$\Psi$ can be assumed to be
an $\ep$-rescaling. A minor modification of the argument would have
allowed us to take $\|\Psi_j^1-\id\|_{C^k(\RR^d)}$ arbitrarily small,
with $k$ any fixed number.
\end{remark}

\section{A weighted estimate for a convolution operator}
\label{S.weight}

In promoting solutions to the Helmholtz equation with sharp decay at
infinity to solutions to the Allen--Cahn equation, the estimates that
we establish in this section will play a key role. 

Specifically, we will be
interested in the convolution of the fundamental solution~$G$,
introduced in Eq.~\eqref{Gfunct}, with
functions with certain decay rate at infinity. To quantify this, for any
nonnegative integer $k$ and any positive real $\nu$ let us denote by
$C^k_\nu(\RR^d)$ the closure of the space of Schwartz functions on
$\RR^d$ with respect to the metric
\[
\|v\|_{k,\nu}:=\max_{|\al|\leq k}\sup_{x\in\RR^d}|\X^\nu\, \pd^\al v(x)|\,.
\]
Clearly 
\[
\| v\, w\|_{k,\nu+\nu'}\leq C\| v\|_{k,\nu}\| w\|_{k,\nu'}
\]
whenever $ v\in C^k_\nu( \RR^d)$ and $ w\in C^k_{\nu'}(\RR^d)$, where $C$ is a constant that only depends on $k$. In
particular,
\begin{equation}\label{Cnus}
\| v^s\|_{k,\nu}\leq C\| v\|_{k, \nu/ s}^s\,.
\end{equation}

The following theorem, which asserts that the convolution with $G$
defines a bounded map $C^k_\nu(\RR^d)\to C^k_{\frac{d-1}2}(\RR^d)$ for
any $\nu>d$, provides the estimates that we need:

\begin{theorem}\label{T.G}
Suppose that $d\geq3$. Then for any $ v\in C^k_\nu(\RR^d)$ with  $k\geq0$ and $\nu>d$, one has
\[
\|G* v\|_{k,\frac{d-1}2}\leq C\| v\|_{k,\nu}
\] 
with a constant that depends on~$d$ and~$\nu$ but not on~$ v$ nor $k$.
\end{theorem}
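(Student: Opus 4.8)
The plan is to estimate the convolution kernel $G(x-y)$ pointwise and then integrate against the weight $\X^{-\nu}$, treating separately the near-origin (Newtonian) singularity of $G$ and its oscillatory decay at infinity. First I would record the two regimes for $G$: from~\eqref{Gfunct} and the asymptotics of Bessel functions, $|G(x)|\leq C|x|^{2-d}$ for $|x|\leq 1$ (indeed $|\pd^\al G(x)|\leq C_\al|x|^{2-d-|\al|}$), while for $|x|\geq 1$ one has $|\pd^\al G(x)|\leq C_\al |x|^{\frac{1-d}2}$ because $Y_{\frac d2-1}(r)=O(r^{-1/2})$ together with its derivatives. Since $\De G+G=0$ away from the origin, bounds on higher derivatives of $G$ at infinity follow from those on $G$ itself by elliptic interior estimates, so there is no loss in differentiating under the integral sign: it suffices to prove $\|G*v\|_{0,\frac{d-1}2}\leq C\|v\|_{0,\nu}$ and apply it to $\pd^\al v$ for $|\al|\leq k$, using $\pd^\al(G*v)=G*(\pd^\al v)$ and $\|\pd^\al v\|_{0,\nu}\leq\|v\|_{k,\nu}$; this is why the constant is independent of $k$.

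The heart of the matter is then the scalar inequality
\[
\sup_{x\in\RR^d}\ \X^{\frac{d-1}2}\int_{\RR^d}|G(x-y)|\,\X[y]^{-\nu}\,dy\ \leq C,
\]
where I write $\X[y]=(1+|y|^2)^{1/2}$. Fix $x$ with $|x|=:\rho$, and split the $y$-integral into three pieces: (a) $|x-y|\leq 1$, (b) $|x-y|\geq 1$ and $|y|\leq \rho/2$, (c) $|x-y|\geq 1$ and $|y|\geq \rho/2$. On region (a), use $|G(x-y)|\leq C|x-y|^{2-d}$; since $d\geq 3$ this is locally integrable and, as $|y|\sim\rho$ there, the contribution is $\leq C\rho^{-\nu}\leq C\rho^{-\frac{d-1}2}$ (using $\nu>d>\frac{d-1}2$), which after multiplying by $\X^{\frac{d-1}2}$ is bounded. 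On region (c), bound $|G(x-y)|\leq C|x-y|^{\frac{1-d}2}$ and $\X[y]^{-\nu}\leq C\rho^{-\nu}$; the remaining integral $\int_{|x-y|\geq1}|x-y|^{\frac{1-d}2}\,dy$ taken over $|y|\lesssim\rho$ is at most $C\rho^{d-\frac{d-1}2}=C\rho^{\frac{d+1}2}$, so the total is $\leq C\rho^{\frac{d+1}2-\nu}\leq C\rho^{-\frac{d-1}2}$ exactly when $\nu\geq d$, which holds since $\nu>d$. Region (b) is the one where the sharp decay of $G$ is used honestly: there $|x-y|\geq\rho/2$, so $|G(x-y)|\leq C\rho^{\frac{1-d}2}$, and we are left with $\int_{\RR^d}\X[y]^{-\nu}\,dy<\infty$ because $\nu>d$; this gives a contribution $\leq C\rho^{\frac{1-d}2}$, which is precisely of the claimed size. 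For small $\rho$ (say $\rho\leq 1$) all three estimates are uniform and the decay factor $\X^{\frac{d-1}2}$ is bounded, so the bound is trivial there; hence one only needs $\rho$ large in the above.

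The main obstacle is bookkeeping the exponents so that the three regional bounds all collapse to $O(\rho^{-\frac{d-1}2})$ under the single hypothesis $\nu>d$ — in particular making sure region (c), where the kernel decays slowly over a ball of radius comparable to $\rho$, does not cost more than $\rho^{-\frac{d-1}2}$; this is exactly where $\nu>d$ (rather than merely $\nu>\frac{d-1}2$) is forced. A secondary technical point is justifying differentiation under the integral and the reduction to the $k=0$ case: this follows from the interior elliptic estimates for the Helmholtz operator applied to $G$ on annuli $\{1\leq|x|\leq 2\}$ rescaled outward, which upgrade the $|x|^{\frac{1-d}2}$ decay of $G$ to $|\pd^\al G(x)|\leq C_\al|x|^{\frac{1-d}2}$ with constants depending only on $d$ and $|\al|$, but—crucially—one never differentiates more than needed because $\pd^\al(G*v)=G*(\pd^\al v)$ moves all derivatives onto $v$. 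Once the scalar inequality is in hand, the theorem follows immediately by Minkowski's integral inequality.
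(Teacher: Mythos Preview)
Your strategy is the paper's: reduce to $k=0$ via $\pd^\al(G*v)=G*(\pd^\al v)$, use the pointwise bounds $|G(x)|\leq C|x|^{2-d}$ near~$0$ and $|G(x)|\leq C|x|^{(1-d)/2}$ at infinity, and split the convolution integral into regions. The gap is in your region~(c). That region is $\{|x-y|\geq 1,\ |y|\geq\rho/2\}$, which is \emph{unbounded} in~$y$; once you replace $\langle y\rangle^{-\nu}$ by $C\rho^{-\nu}$, the remaining integral
\[
\int_{\substack{|y|\geq\rho/2\\ |x-y|\geq 1}}|x-y|^{\frac{1-d}2}\,dy
\]
diverges, since the integrand behaves like $|y|^{(1-d)/2}$ as $|y|\to\infty$ and $(1-d)/2>-d$. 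Your parenthetical ``taken over $|y|\lesssim\rho$'' quietly drops the tail $|y|\gg\rho$, which is where the divergence sits; as written, the bound for~(c) is not valid.

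The repair is to split~(c) once more. On the annulus $\rho/2\leq|y|\leq 2\rho$ your computation goes through: there $|x-y|\leq 3\rho$, so the kernel integral is $\leq C\rho^{(d+1)/2}$ and the contribution is $\leq C\rho^{(d+1)/2-\nu}\leq C\rho^{(1-d)/2}$. On the tail $|y|\geq 2\rho$ you must \emph{not} separate kernel and weight: there $|x-y|\geq|y|-\rho\geq|y|/2$, hence
\[
|x-y|^{\frac{1-d}2}\,\langle y\rangle^{-\nu}\leq C\,|y|^{\frac{1-d}2-\nu},
\]
which is integrable over $|y|\geq 2\rho$ (needing only $\nu>\tfrac{d+1}2$) and yields $C\rho^{(d+1)/2-\nu}\leq C\rho^{(1-d)/2}$. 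This is exactly what the paper does: it uses four regions rather than three, the additional one being $\{|x-y|>2|x|\}$ (its~$I_4$), which is precisely your missing tail. With that fourth region inserted, your argument is complete and matches the paper's.
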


\begin{proof}
In view of the well known asymptotics for Bessel functions when $d\geq3$, there is a
positive constant $C$ such that $G$ is bounded by
\[
|G(x)|\leq
\begin{cases}
C\, |x|^{2-d} &\text{if } |x|<1\,,\\
C\, |x|^{\frac{1-d}2} &\text{if } |x|>1\,.
\end{cases}
\]
It then follows that
\begin{align}
  |G* v(x)|&\leq \int_{\RR^d} |G(z)|\,| v(x-z)|\, dz\notag\\
            &\leq  C\| v\|_{0,\nu}\,\bigg(\int_{B_1}|z|^{2-d}\, \langle
              x-z\rangle^{-\nu}\, dz + \int_{\RR^d\backslash B_1}|z|^{\frac{1-d}2}\, \langle x-z\rangle^{-\nu}\, dz\bigg)\,.\label{Gvp}
\end{align}
For any fixed $x$, the first integral is convergent for any value of $\nu$, while the second
converges provided that $\nu>\frac{d+1}2$. Since $\nu>d>\frac{d+1}2$, we infer that $G* v(x)$ is well
defined as a convergent integral for any $ v\in C^0_\nu(\RR^d)$ and
all $x\in\RR^d$, and
it only remains to analyze its behavior for large $|x|$.

For concreteness, let us assume that $|x|>2$. We shall next show that
the integrals
\begin{align*}
I_1&:=\int_{B_1}|z|^{2-d}\, \langle
              x-z\rangle^{-\nu}\, dz\\[1mm]
I_2&:=\int_{B_{|x|/2}}|z|^{\frac{1-d}2}\, \langle
     x-z\rangle^{-\nu}\, dz\,,\\[1mm]
I_3&:=\int_{B_{2|x|}\backslash B_{|x|/2}}|z|^{\frac{1-d}2}\, \langle x-z\rangle^{-\nu}\, dz\,,\\[1mm]
I_4&:=\int_{\RR^d\backslash B_{2|x|}}|z|^{\frac{1-d}2}\, \langle x-z\rangle^{-\nu}\, dz
\end{align*}
are then bounded as
\begin{equation}\label{estIj}
I_j< C|x|^{\frac{1-d}2}\,,
\end{equation}
where $C$ does not depend on~$ v$. In view of the inequality~\eqref{Gvp} and the fact that
\[
\int_{\RR^d\backslash B_1}|z|^{\frac{1-d}2}\, \langle
x-z\rangle^{-\nu}\leq I_2+I_3+I_4\,,
\]
this shows that the convolution with $G$ is a bounded map
$C^0_\nu(\RR^d)\to C^0_{\frac{d-1}2}(\RR^d)$. Since for any
multiindex~$\al$ we have
\[
\pd^\al(G* v)=G*(\pd^\al v)\,,
\]
this immediately implies that the convolution with $G$ is also a
bounded map $C^k_\nu(\RR^d)\to C^k_{\frac{d-1}2}(\RR^d)$, thereby
proving the theorem.

So it only remains to prove the estimate~\eqref{estIj} for
$1\leq j\leq4$. For this we will start by using the elementary inequality 
\[
\langle x-z\rangle > 
\begin{cases}
\frac12|x| &\text{if }|z|<\frac{|x|}2\,,\\[1mm]
\frac12|z| &\text{if }|z|>2|x|
\end{cases}
\]
to obtain, for $|x|>2$,
\begin{align*}
I_1&< C|x|^{-\nu}\int_{B_1}|z|^{2-d}\, dz=C|x|^{-\nu}<C|x|^{-d}< C|x|^{\frac{1-d}2}\,,\\[1mm]
I_2&< C|x|^{-\nu}\int_{B_{|x|/2}}|z|^{\frac{1-d}2}\, dz= C|x|^{\frac{d+1}2-\nu}< C|x|^{\frac{1-d}2}\,,\\[1mm]
I_4&< C\int_{\RR^d\backslash B_{2|x|}}|z|^{\frac{1-d}2-\nu}\, dz=C|x|^{\frac{d+1}2-\nu}<C|x|^{\frac{1-d}2}\,.
\end{align*}
To obtain these bounds we have used that $\nu>d$ by assumption.

To estimate $I_3$ we choose a Cartesian basis so that $x=|x|\, e_1$
and then use the rescaled variable $\bar z:=z/|x|$ to write
\begin{align}
I_3&= \int_{B_{2|x|}\backslash B_{|x|/2}}|z|^{\frac{1-d}2}\langle x-z\rangle^{-\nu}\,
     dz\notag\\
&= |x|^{\frac{d+1}2-\nu}\int_{B_{2}\backslash B_{1/2}}\frac{|\bar
  z|^{\frac{1-d}2}\, d\bar
  z}{(\frac1{|x|^2}+|e_1-\bar z|^2)^{\frac\nu2}} \,.\label{I3}
\end{align}
Denoting by $B'$ the ball centered at $e_1$ of radius $\frac14$, one
can check that
\begin{align*}
\int_{B'} \frac{|\bar
  z|^{\frac{1-d}2}\, d\bar
  z}{(\frac1{|x|^2}+|e_1-\bar z|^2)^{\frac\nu2}}  &<C\int_0^{1/4}\frac{\rho^{d-1}\, d\rho}{(\frac1{|x|^2}+\rho^2)^{\frac\nu2}}\\
&=C|x|^{\nu-d}\int_0^{|x|/4}\frac{\bar\rho^{d-1}\,
  d\bar\rho}{(1+\bar\rho^2)^{\frac\nu2}}\\
&<C|x|^{\nu-d}\int_0^{\infty}\frac{\bar\rho^{d-1}\, d\bar\rho}{(1+\bar\rho^2)^{\frac\nu2}}\\
&<C|x|^{\nu-d}
\end{align*}
where we have defined $\bar\rho:=|x|\rho$, and we have used that the integral in $\bar\rho$ is convergent for any $\nu>d$. Plugging this in~\eqref{I3}, one gets that
\begin{align*}
I_3&= |x|^{\frac{d+1}2-\nu}\bigg(\int_{B'}\frac{|\bar
  z|^{\frac{1-d}2}\, d\bar
  z}{(\frac1{|x|^2}+|e_1-\bar z|^2)^{\frac{\nu}{2}}} + \int_{B_{2}\backslash
     (B_{1/2}\cup B')}\frac{|\bar
  z|^{\frac{1-d}2}\, d\bar
  z}{(\frac1{|x|^2}+|e_1-\bar z|^2)^{\frac{\nu}{2}}} \bigg)\\
&<C|x|^{\frac{d+1}2-\nu}(|x|^{\nu-d}+C)\\
&<C|x|^{\frac{1-d}2}\,.
\end{align*}
To obtain the first inequality we have used that $|e_1-\bar
z|^2\geq\frac{1}{16}$ for all $\bar z\in B_{2}\backslash
     (B_{1/2}\cup B')$, and the second inequality follows from the assumption $\nu>d$.  This is the last estimate that we needed in~\eqref{estIj} and thus the theorem follows.
\end{proof}

In view of the structure of the nonlinearity of the Allen--Cahn
equation, the following corollary will be useful:

\begin{corollary}\label{C.G}
For any $v\in C^k_{\frac{d-1}2}(\RR^d)$ with  $k\geq0$ and $d\geq 4$, one has the estimate
\[
\|G*( v^3)\|_{k,\frac{d-1}2}\leq C\| v\|_{k,\frac{d-1}2}^3\,.
\]
\end{corollary}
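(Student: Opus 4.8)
The plan is to combine Theorem~\ref{T.G} with the multiplicative estimate~\eqref{Cnus} in a direct way. Given $v\in C^k_{\frac{d-1}2}(\RR^d)$, I would first apply~\eqref{Cnus} with $s=3$ and $\nu=\frac{3(d-1)}2$ to obtain
\[
\|v^3\|_{k,\frac{3(d-1)}2}\leq C\|v\|_{k,\frac{d-1}2}^3\,.
\]
This places $v^3$ in $C^k_{\nu'}(\RR^d)$ with $\nu'=\frac{3(d-1)}2$. The only thing to check is that this exponent lies in the range where Theorem~\ref{T.G} applies, namely $\nu'>d$. Since $\frac{3(d-1)}2>d$ is equivalent to $3d-3>2d$, i.e. $d>3$, the hypothesis $d\geq4$ is exactly what is needed here; this is the one place the dimensional restriction $d\geq4$ (as opposed to $d\geq3$ in Theorem~\ref{T.G}) enters.

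Having verified $\nu'=\frac{3(d-1)}2>d$, I would then invoke Theorem~\ref{T.G} directly with this value of $\nu$ to conclude
\[
\|G*(v^3)\|_{k,\frac{d-1}2}\leq C\|v^3\|_{k,\frac{3(d-1)}2}\leq C\|v\|_{k,\frac{d-1}2}^3\,,
\]
where the constant $C$ depends on $d$ (through both $\nu'$ and $k$-independence is inherited from Theorem~\ref{T.G}) but not on $v$ or $k$. Strictly speaking one should first observe that $v^3\in C^k_{\frac{3(d-1)}2}(\RR^d)$ in the sense of the closure definition — this follows because $v$ is a limit of Schwartz functions in the $\|\cdot\|_{k,\frac{d-1}2}$ norm, the cubing map is continuous in these norms by the product estimate, and the cube of a Schwartz function is Schwartz — so the hypotheses of Theorem~\ref{T.G} are genuinely met.

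There is essentially no obstacle here: the corollary is a formal consequence of the preceding theorem and the elementary product inequality, and the proof is two lines. The only substantive point — and the only reason this is stated as a separate corollary rather than absorbed into Theorem~\ref{T.G} — is the bookkeeping of the weight exponents: one must check that cubing a function with decay rate $\frac{d-1}2$ produces decay rate $\frac{3(d-1)}2$, which is strictly faster than $d$ precisely when $d\geq4$, so that the convolution with $G$ recovers the original decay rate $\frac{d-1}2$. This closes the loop and shows that the Allen--Cahn nonlinearity maps the weighted space $C^k_{\frac{d-1}2}(\RR^d)$ to itself after convolution with the fundamental solution, which is the fixed-point setup needed to promote Helmholtz solutions to Allen--Cahn solutions.
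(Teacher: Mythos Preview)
Your proof is correct and follows exactly the same route as the paper: apply Theorem~\ref{T.G} with $\nu=\frac{3(d-1)}2$, noting that $\nu>d$ precisely when $d\geq4$, and combine with the product estimate~\eqref{Cnus}. The only addition you make is the (correct) remark that $v^3$ genuinely lies in the closure space $C^k_{\frac{3(d-1)}2}(\RR^d)$, which the paper leaves implicit.
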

\begin{proof}
We can apply Theorem~\ref{T.G} with $\nu:=\frac{3d-3}2$ because $\nu>d$ for
all dimensions $d\geq4$, thus implying that
\begin{align*}
  \|G*( v^3)\|_{k,\frac{d-1}2}\leq
  C\| v^3\|_{k,\frac{3d-3}2}\leq C\| v\|_{k,\frac{d-1}2}^3\,,
\end{align*}
where we have used the relation~\eqref{Cnus}.
\end{proof}

\section{Proof of Theorem~\ref{T.main}}
\label{S.main}

We are now ready to prove the main result of this paper, which reduces to
Theorem~\ref{T.main} when $N=1$. 

\begin{theorem}\label{T.precise}
  Let $\Si_1,\dots, \Si_N$ be compact orientable hypersurfaces without
  boundary of~$\RR^d$ that are not linked, with $d\geq4$, and let us
  take any positive integer~$k$. Then there is a
  diffeomorphism $\Phi$ of~$\RR^d$ such that $\Phi(\Si_1),\dots , \Phi(\Si_N)$ are
  connected components of the level set $u^{-1}(0)$ of a smooth solution to
  the Allen--Cahn equation in $\RR^d$ that is bounded as
  $|\pd^\al u(x)|<C_\al\X^{\frac{1-d}2}$ for any multiindex with~$|\al|<k$. Furthermore, these level
  sets are structurally stable and the diffeomorphism~$\Phi$ can be
  assumed to have the same structure as in Remark~\ref{R.rescaling}.
\end{theorem}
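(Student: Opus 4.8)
The plan is to obtain the Allen--Cahn solution as a fixed point of the natural integral reformulation, starting from the Helmholtz solution provided by Theorem~\ref{T.Helmholtz}. First I would invoke Theorem~\ref{T.Helmholtz} to produce a function $w$ with $\De w+w=0$ on~$\RR^d$, a diffeomorphism $\Psi$ of the structure described in Remark~\ref{R.rescaling}, and the decay $|\pd^\al w(x)|\le C_\al\X^{\frac{1-d}2}$ for all $|\al|\le k$, such that $\Psi(\Si_1),\dots,\Psi(\Si_N)$ are structurally stable components of $w^{-1}(0)$. By rescaling $w$ by a small factor $\eta>0$ (which only rescales the domain and leaves the nodal set a dilate of the original, so composing with a further dilation we may assume the nodal components are still $\eta$-close to rescalings of the $\Si_j$) we may take $\|w\|_{k,\frac{d-1}2}$ as small as we wish while keeping $\min|\nabla w|$ bounded below on the relevant hypersurfaces relative to $\|w\|_{k,\frac{d-1}2}$ --- indeed the relevant quantities scale homogeneously, so the gradient lower bound survives.

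Next I would set up the contraction. Writing $u=w+v$, the Allen--Cahn equation $\De u+u-u^3=0$ becomes $\De v+v=u^3=(w+v)^3$, and since $G$ is a fundamental solution this is equivalent (in the class of functions decaying at infinity) to the fixed-point equation $v=\cT(v):=G*\big((w+v)^3\big)$. I would seek $v$ in a small closed ball $\cB_\rho:=\{v\in C^k_{\frac{d-1}2}(\RR^d):\|v\|_{k,\frac{d-1}2}\le\rho\}$. By Corollary~\ref{C.G}, $\|G*((w+v)^3)\|_{k,\frac{d-1}2}\le C\|w+v\|_{k,\frac{d-1}2}^3\le C(\|w\|_{k,\frac{d-1}2}+\rho)^3$, so choosing $\rho$ comparable to $\|w\|_{k,\frac{d-1}2}$ small and then $\|w\|_{k,\frac{d-1}2}$ small enough, $\cT$ maps $\cB_\rho$ into itself. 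For the contraction estimate one uses the algebraic identity $a^3-b^3=(a-b)(a^2+ab+b^2)$ together with Theorem~\ref{T.G} applied with $\nu=\frac{3d-3}2>d$ (valid for $d\ge4$) and the product inequality preceding Corollary~\ref{C.G}: $\|\cT(v_1)-\cT(v_2)\|_{k,\frac{d-1}2}\le C\big(\|w\|_{k,\frac{d-1}2}+\rho\big)^2\|v_1-v_2\|_{k,\frac{d-1}2}$, which is a genuine contraction once $\|w\|_{k,\frac{d-1}2}$ and $\rho$ are small. Banach's fixed point theorem then yields a unique $v\in\cB_\rho$ with $v=\cT(v)$; elliptic regularity bootstraps $v$ (hence $u=w+v$) to a smooth classical solution of Allen--Cahn, and $u\in C^k_{\frac{d-1}2}$ gives exactly the falloff $|\pd^\al u(x)|<C_\al\X^{\frac{1-d}2}$ for $|\al|<k$.

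Finally I would transfer the topological conclusion from $w$ to $u$. Since $\|u-w\|_{k,\frac{d-1}2}=\|v\|_{k,\frac{d-1}2}\le\rho$ and $\rho$ can be taken an arbitrarily small multiple of $\|w\|_{k,\frac{d-1}2}$, in particular $\|u-w\|_{C^1}$ is small on a fixed neighborhood of $\Psi(\Si_1)\cup\cdots\cup\Psi(\Si_N)$ relative to $\min|\nabla w|$ there. By Thom's isotopy theorem (as used in the proof of Theorem~\ref{T.Helmholtz}) there is a diffeomorphism $\Theta$, $C^1$-close to the identity and supported near that set, such that $\Theta\big(\Psi(\Si_1)\big),\dots,\Theta\big(\Psi(\Si_N)\big)$ are components of $u^{-1}(0)$, and $\min|\nabla u|>0$ on this set gives structural stability. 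Setting $\Phi:=\Theta\circ\Psi$ and noting that $\Theta$ near each $\Si_j$ is $C^1$-close to the identity while $\Psi$ already has the structure in Remark~\ref{R.rescaling}, the composite $\Phi$ acts on each $\Si_j$ as a rigid motion followed by a rescaling followed by a $C^1$-small (indeed $C^k$-small, by the last sentence of Remark~\ref{R.rescaling}) perturbation, as required. The main obstacle is purely quantitative: one must be careful that the smallness of $\|w\|_{k,\frac{d-1}2}$ needed for the contraction is achieved \emph{simultaneously} with retaining a gradient lower bound on the nodal hypersurfaces that dominates $\|u-w\|_{C^1}$; this is handled by the scaling remark, since dilating $w\mapsto w(\eta\,\cdot)$ multiplies $\|w\|_{k,\frac{d-1}2}$ and the gradient on the (dilated) hypersurfaces by comparable powers of $\eta$, so the ratio controlling Thom's theorem is scale-invariant.
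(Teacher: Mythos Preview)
Your overall strategy---fixed-point argument for $u=w+G*(u^3)$ in $C^k_{\frac{d-1}2}$ using Corollary~\ref{C.G}, then Thom's isotopy theorem to transfer the nodal structure---is exactly the paper's approach. The genuine gap is the smallness mechanism. The spatial dilation $w\mapsto w(\eta\,\cdot)$ does \emph{not} preserve the Helmholtz equation: if $\De w+w=0$ then $\tilde w(x):=w(\eta x)$ satisfies $\De\tilde w+\eta^2\tilde w=0$, so $\tilde w$ is no longer a solution of $\De+1$ and the identity $\De u+u=u^3$ no longer follows from $u=\tilde w+G*(u^3)$. Moreover, the dilation does not make $\|\tilde w\|_{k,\frac{d-1}2}$ small: for any $\eta$, $\langle x\rangle^{\frac{d-1}2}|\tilde w(x)|$ at $x=0$ equals $|w(0)|$, and more generally $\sup_x\langle x\rangle^{\frac{d-1}2}|w(\eta x)|\ge\sup_{|y|\le\eta}|w(y)|$, which is bounded below independently of~$\eta$. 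So your scaling remark, on which the whole quantitative balance rests, fails on both counts.

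The fix is to use \emph{scalar} multiplication instead: replace $w$ by $\de w$ for a small constant~$\de$. This keeps the Helmholtz equation (by linearity) and the zero set (identically, not up to dilation), and makes $\|\de w\|_{k,\frac{d-1}2}=\de\|w\|_{k,\frac{d-1}2}$ genuinely small. Your contraction then gives $u=\de w+G*(u^3)$ with $\|u-\de w\|_{k,\frac{d-1}2}\le C\|u\|_{k,\frac{d-1}2}^3\le C\de^3$. Now the gradient lower bound on the hypersurfaces has also shrunk to $\de\min|\nabla w|$, but the paper handles this cleanly by comparing $u/\de$ to~$w$ rather than $u$ to~$\de w$: one gets $\|w-u/\de\|_{k,\frac{d-1}2}=\de^{-1}\|G*(u^3)\|_{k,\frac{d-1}2}\le C\de^{-1}\de^3=C\de^2$, which is smaller than the \emph{fixed} stability threshold of~$w$ once $\de$ is small. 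Since $u$ and $u/\de$ share the same zero set, Thom's theorem applied to the pair $(w,u/\de)$ gives exactly the diffeomorphism~$\Phi$ you want. Equivalently, your ``ratio controlling Thom's theorem'' is indeed invariant---but under scalar rescaling, not spatial dilation.
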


\begin{proof}
By Theorem~\ref{T.Helmholtz} there is a solution~$w$ to the Helmholtz
equation
\[
\De w +w=0
\]
on $\RR^d$ such that $\Psi(\Si_1),\dots,\Psi(\Si_N)$ are connected
components of its zero set $w^{-1}(0)$, where $\Psi$ is a
diffeomorphism of~$\RR^d$. Moreover, $\|w\|_{k,\frac{d-1}2}<C$ and the
above hypersurfaces are structurally stable in the sense that there exist
a large ball $B_R$ and a positive constant~$\eta$ such that, if $w'$
is any function with
\begin{equation}\label{ww'}
\|w-w'\|_{C^1(B_R)}<\eta\,,
\end{equation}
then there is a diffeomorphism~$\Phi$ of~$\RR^d$ such that
\begin{align}\label{deformedSis}
\Phi(\Si_1)\cup\cdots\cup\Phi(\Si_N)
\end{align}
are structurally stable connected components of the level set
$w'^{-1}(0)$. Furthermore, $\Phi$ is close to $\Psi$ in the norm~$C^1(\RR^d)$.

Let us take a small positive constant~$\ep$ that will be fixed later
and consider the iterative scheme
\begin{align}
u_0&:=\de w\,,\notag\\
u_{n+1}&:=\de w + G*(u_n^3)\,,\label{iter2}
\end{align}
where we have set
\[
\de:=\frac\ep{2\|w\|_{k,\frac{d-1}2}}\,.
\]
Our goal is to show that if $\ep$ is small enough, $u_n$ converges in
$C^k_{\frac{d-1}2}(\RR^d)$ to a function~$u$ that satisfies the
Allen--Cahn equation
\[
\De u + u-u^3=0
\]
and is close to $\de w$ in a suitable norm.

A first observation is that, if $\|u_n\|_{k,\frac{d-1}2}<\ep$ and
$\ep$ is small enough, by the definition of~$\de$ we
automatically have
\begin{align}
  \|u_{n+1}\|_{k,\frac{d-1}2}&\leq
  \de\|w\|_{k,\frac{d-1}2}+\|G*(u_n^3)\|_{k,\frac{d-1}2}\notag\\
  &\leq \de\|w\|_{k,\frac{d-1}2}+C\|u_n\|_{k,\frac{d-1}2}^3\notag\\ &\leq
  \frac\ep2+C\ep^3\notag\\
&<\ep\,.\label{estu1}
\end{align}
Here we have used Corollary~\ref{C.G} to estimate $G*(u_n^3)$. Notice
that the smallness that we have to impose on~$\ep$ only depends on the
constant that appears in Corollary~\ref{C.G}. In particular, since the first function $u_0$ of the iteration satisfies
$$
\|u_0\|_{k,\frac{d-1}2}=\frac{\ep}2\,,
$$
the induction property~\eqref{estu1} then implies that 
\begin{equation}\label{bun}
\|u_n\|_{k,\frac{d-1}2}<\ep
\end{equation}
for all~$n$.

To estimate the difference $u_{n+1}-u_n$, let us start by noticing
that for any functions $v,v'$ we have
\begin{align*}
\|v^3-v'^3\|_{k,\frac{3d-3}2}&=\max_{|\al|\leq k}\sup_{x\in\RR^d}\X^{\frac{3d-3}2}\,\big|\pd^\al \big(v^2(v-v')+vv'(v-v')+v'^2(v-v')\big)\big|\\
&\leq C\,(\|v\|^2_{k,\frac{d-1}2}+\|v'\|^2_{k,\frac{d-1}2})\,\|v-v'\|_{k,\frac{d-1}2}\,.
\end{align*}
It then follows from Theorem~\ref{T.G}, the fact
that~$\frac{3d-3}2>d$ when $d\geq4$, and Eq.~\eqref{bun} that we can write
\begin{align}
\|u_{n+1}-u_n\|_{k,\frac{d-1}2}&=\|G*(u_n^3-u_{n-1}^3)\|_{k,\frac{d-1}2}\notag\\
&\leq C\|u_n^3-u_{n-1}^3\|_{k,\frac{3d-3}2}\notag\\
&\leq C\ep^2\, \|u_n-u_{n-1}\|_{k,\frac{d-1}2}\,.\label{estu2}
\end{align}
If $\ep$ is small enough for $C\ep^2<\frac12$, it is standard
that~\eqref{estu1} and~\eqref{estu2} imply that~$u_n$ converges in
$C^k_{\frac{d-1}2}(\RR^d)$ to some function~$u$ with
\begin{equation}\label{uep3}
\|u\|_{k,\frac{d-1}2}\leq\ep\,.
\end{equation}
Since the map $v\mapsto G*(v^3)$ is continuous in
$C^k_{\frac{d-1}2}(\RR^d)$, from~\eqref{iter2} we infer that $u$
satisfies the integral equation
\begin{equation}\label{Gu}
u=\de w+G*(u^3)\,.
\end{equation}
As $w$ is a solution of the Helmholtz equation and $G$ is a
fundamental solution satisfying~\eqref{fundamental}, it then follows that
\[
\De u + u= u^3\,,
\]
so $u$ is a solution of the Allen--Cahn equation, which is smooth by elliptic regularity.

One can now use the bound~\eqref{uep3}, the relation~\eqref{Gu} and
the definition of~$\de$ to write
\begin{align*}
\bigg\|w-\frac u\de\bigg\|_{k,\frac{d-1}2}=\frac1\de\, \|\de
                                            w-u\|_{k,\frac{d-1}2}=\frac1\de\,
                                            \|G*(u^3)\|_{k,\frac{d-1}2}\leq
                                            \frac
                                            C\de\|u\|^3_{k,\frac{d-1}2}\leq C\ep^2\,.
\end{align*}
In view of the stability estimate~\eqref{ww'}, if $\ep$ is small
enough (namely, $C\ep^2<\eta$), we infer that there is a
diffeomorphism~$\Phi$, close to the diffeomorphism~$\Psi$ in the norm~$C^1(\RR^d)$, such that the hypersurfaces~\eqref{deformedSis}
are structurally stable connected components of the level set $u^{-1}(0)$. The theorem
then follows.
\end{proof}

\section*{Acknowledgments}

The authors are supported by the ERC Starting Grants~633152 (A.E.) and~335079
(D.P.-S.). This work is supported in part by the
ICMAT--Severo Ochoa grant
SEV-2011-0087.

\bibliographystyle{amsplain}

\end{document}